\documentclass[11pt]{amsart}

\usepackage[margin=1in]{geometry} 

\usepackage{amsmath,amsfonts,amsthm,amssymb}
\usepackage{mathtools}

\newtheorem{theorem}{Theorem}

\usepackage{hyperref}
\usepackage{ulem}

\newcommand{\WW}{\textcolor{blue}}
\usepackage{todonotes}

\newcommand{\norm}[2]{\Vert #1 \Vert_{#2}}
\newcommand{\subeqref}[2]{$ \eqref{#1}_{#2} $}
\newcommand{\mrm}[1]{\mathrm{#1}}

\include{frontmatter}

\title[2D Oldroyd-B]{2D incompressible inviscid Oldroyd-B equations: ill-posedness, long time existence, and high Weissenberg number limit}

\author[X. Liu]{Xin Liu}
\address{Department of Mathematics, Texas A\&M University, College Station, TX, USA}
\email{xliu23@tamu.edu}

\author[W. Wang]{Weinan Wang}
\address{Department of Mathematics, University of Oklahoma, Norman, OK, USA}
\email{ww@ou.edu}

\date{\today}

\begin{document}
\include{frontmatter}

\begin{abstract}
In this paper, we consider the high-Weissenberg number limit of a Voigt-regularized two-dimensional Oldroyd-B model for viscoelastic fluids. We first demonstrate that the Euler-Oldroyd-B system is both linearly and nonlinearly ill-posed in Sobolev spaces, exhibiting Hadamard instability. Then, we introduce a Voigt-type regularization on the stress tensor, which stabilizes the system. For the regularized model, we establish long time ($T \sim \mathcal O(\varepsilon^{-2/3})$) well-posedness and uniform energy estimates with respect to the relaxation parameter $\varepsilon>0$. Lastly, we prove that, as $\varepsilon \to 0$, the solutions converge to a solution of the 2-d incompressible Navier-Stokes equations over time intervals of size $\mathcal O(\varepsilon^{-2/3})$. The proof relies on a decomposition of the stress tensor, high-order energy estimates, and a detailed analysis of the nonlinear coupling terms. Our results provide a mathematical justification for the Newtonian limit of a regularized viscoelastic fluid model that is otherwise ill-posed.
\end{abstract}

\maketitle

\section{Introduction}

\subsection{The Euler-Oldroyd-B system}

The hydrodynamics of viscoelastic fluids are of fundamental importance in rheology and complex fluid dynamics. Among the various constitutive models describing such fluids, the Oldroyd-B model stands out as a prototypical example, capturing significant non-Newtonian phenomena such as stress relaxation and normal stress differences. The mathematical analysis of the Oldroyd--B system has attracted considerable attention; we refer to
\cite{Renardy2000,ConstantinSun2012} for background.
Global existence of weak solutions for several Oldroyd-type models (including regularized/diffusive variants)
was developed in the seminal works of Lions--Masmoudi \cite{LionsMasmoudi2000,LionsMasmoudi2001} and further
studied in \cite{FWW, Masmoudi2011,HuLelievre2007,BarrettBoyaval2011}. For more insights into the physics of viscoelastic
fluid flows, the reader is refered to \cite{Oswald2005,Ottinger2005,OttoTzavaras2008,LeBrisLelievre2009,WapperomHulsen1998}.
In the absence of stress diffusion, the constitutive equation is essentially hyperbolic/transport and the
global well-posedness theory for large data remains largely open.

In this paper, we investigate the dynamics of the two-dimensional incompressible Euler-Oldroyd-B system and its relaxation limit under a specific regularization. To be more precise, we are interested in the 2-d Oldroyd-B system in $\mathbb R^2$.
Let $ u : \mathbb T^2 \times [0,\infty) \mapsto \mathbb R^2 $ be the incompressible velocity field. Denote by $ p $ the flow pressure. $ \tau: \mathbb T^2 \times [0,\infty) \mapsto \mathbb R^4 $ is viscous stress tensor of the flow. We write the symmetric and anti-symmetric gradients of $ u $ as
\begin{align}
    \label{def:sym-gradiant}
    D(u) &:= \frac{\nabla u + \nabla u^\top}{2}, \\
\intertext{and}
    \label{def:skew-gradient}
    W(u) &: =\frac{\nabla u - (\nabla u)^T}{2}.
\end{align}
Then the 2D Euler-Oldroyd-B system can be written as
\begin{equation}
\label{sys:oldroyd-b}
    \begin{cases}
        \partial_t u + u\cdot \nabla u +\nabla p=\nabla \cdot \tau, \\
        \partial_t \tau + u\cdot \nabla \tau + Q(\tau, \nabla u) =\frac{1}{\epsilon^2} (D(u)-\tau), \\
        \nabla \cdot u=0,
    \end{cases}
\end{equation}
where the {rotation correction} $ Q $ is given by
\begin{equation}
\label{def:Q}
        Q(\tau, \nabla u) := \tau W(u)-W(u)\tau + b(\tau D(u)+D(u)\tau).
\end{equation}

Unfortunately, as we will show later, system \eqref{sys:oldroyd-b} is in general ill-posed in all Sobolev spaces (Theorem \ref{thm:ill-posedness}). This is due to the loss of regularity brought by $ Q(\tau, \nabla u) $ in \subeqref{sys:oldroyd-b}{2}. For this reason, we consider the following Voigt regularization:
\begin{equation}
\label{sys:oldroyd-bb}
    \begin{cases}
        \partial_t u + u\cdot \nabla u +\nabla p=\nabla \cdot \tau,\\
        \partial_t (\tau - \Delta \tau  ) + u\cdot \nabla \tau +  Q(\tau, \nabla u) =\frac{1}{\epsilon^2} (D(u)-\tau),\\
        \nabla \cdot u=0.
    \end{cases}
\end{equation}

\smallskip

In our nondimensionalization the relaxation time scale is $\varepsilon^2$, i.e.\ the Weissenberg number is
$\mathrm{We}\sim \varepsilon^{-2}$. Hence the high-Weissenberg/fast-relaxation regime corresponds to
$\varepsilon\to 0$, in which the constitutive law enforces $\tau \approx D(u)$ and the system is expected to
converge to the incompressible Navier--Stokes equations.

To capture the large time dynamics of the limiting 2-d Navier-Stokes equations, we consider the following tightened variable:
\begin{equation}
    \label{def:tight-variable}
    \sigma := \tau - D(u). 
\end{equation}
Then one can write system \eqref{sys:oldroyd-bb} in terms of $ (u,\sigma) $ as follows:
\begin{equation}
\label{sys:tightened-oldroyd}
    \begin{cases}
        \partial_t u + u\cdot \nabla u +\nabla p= \Delta u+ \nabla \cdot \sigma,\\
        \partial_t (\sigma -  \Delta \sigma) + u\cdot \nabla \sigma + u\cdot \nabla D(u) +  Q(\sigma + D(u), \nabla u)  \\
        \qquad  + \partial_t D(u) -  \partial_t \Delta D(u) = -\frac{1}{\epsilon^2} \sigma,\\
        \nabla \cdot u=0.
    \end{cases}
\end{equation}
We refer to \eqref{sys:tightened-oldroyd} as the tightened system. 

\smallskip

A central contribution of this work is to demonstrate that the system \eqref{sys:oldroyd-b}, in its inviscid form, exhibits severe pathological behavior. In section \ref{sec:ill-posedness}, we prove that the system is linearly ill-posed in Sobolev spaces. Furthermore, we establish nonlinear ill-posedness in the sense of Hadamard. This suggests that the standard Euler-Oldroyd-B model, without additional dissipative mechanisms, is mathematically physically inconsistent in certain regimes.

Motivated by this ill-posedness and the need for robust numerical schemes, we investigate the \textit{Voigt regularization} of the stress equation. Voigt regularization has been successfully applied to the Navier-Stokes and Euler equations to mitigate regularity issues. 

The primary goal of this paper is to analyze the relaxation limit of the regularized system as $\varepsilon \to 0$. In this limit, formally, the constitutive equation forces $\tau \approx D(u)$, and the system is expected to converge to the incompressible Navier-Stokes equations. This connects the complex visco-elastic dynamics to the classical Newtonian fluid limit.
The Newtonian limit for weakly viscoelastic flows has a long history; see
Saut \cite{Saut1986,Saut2012} and the rigorous analysis in Molinet--Talhouk \cite{MolinetTalhouk2008}. There are also results using relative entropies for the study of the long-time behavior of some micro-macro
models for dilute solutions of polymers \cite{JourdainLeBrisLelievreOtto2006} and relative entropy methods are also the key to the estimates of Otto and Tzavaras in \cite{OttoTzavaras2008}. See also \cite{LattanzioTzavaras2006, LattanzioTzavaras2013}. In the present work we establish such a limit for the Voigt-regularized model on a long
time window that scales like $\mathcal O(\varepsilon^{-2/3})$, with estimates uniform in $\varepsilon$.

From a broader perspective, the passage $\tau\to D(u)$ can be viewed as a {relaxation mechanism} toward an
incompressible viscous flow, which is reminiscent of relaxation approximations to incompressible fluid models;
see, for example, \cite{BrenierNataliniPuel2004,NataliniRousset2006} and related singular-limit works such as
\cite{Brenier2000}. Compared with classical Newtonian-limit analyses for Oldroyd-type systems (e.g. Bresch--Prange \cite{bresch}), our Voigt-regularized/tightened formulation yields a velocity equation with a nondegenerate Laplacian \(\Delta u\), so the low-order coercive estimates do not rely on a small prefactor such as \(1-\omega\).

\subsection{Main contribution and highlights}
\subsubsection{Two-tier energy perspective. }
A important tool of our a priori analysis is a {two-tier} energy scheme, inspired by the framework developed by Guo and Tice in their works \cite{Guo2011, guo2013almost,guo2013decay} on viscous surface waves without surface tension. 
In that setting, one couples (i) a {high-order} energy that controls the full regularity of the solution but whose top derivatives cannot be closed by dissipation alone, with (ii) a {low-order} energy that enjoys genuine decay and provides the time-integrability needed to control the delicate highest-order terms.
We follow the same philosophy here: we introduce a high-tier energy to propagate regularity, together with a lower-tier energy designed to decay.
The decay of $\mathcal E_{\mathrm{low}}$ provides the quantitative smallness/time-integrability that closes the nonlinear estimates at the top level, while the boundedness of $\mathcal E_{\mathrm{high}}$ allows us to upgrade decay rates once the lower tier is controlled.

\subsubsection{Ill-posedness of the inviscid Euler--Oldroyd--B system in Sobolev spaces.}
A second main contribution of this work is to identify a {Hadamard-type instability} for the inviscid Euler--Oldroyd--B dynamics \eqref{sys:oldroyd-b} (for any fixed relaxation parameter $\varepsilon>0$): the problem is ill-posed in every Sobolev space $H^s$, $s\ge 3$, in the sense that the data-to-solution map fails to be continuous.
More precisely, we linearize \eqref{sys:oldroyd-b} around the nontrivial constant stress state $(\bar u,\bar\tau)=(0,\tau_a)$ with $\tau_a=a\,\mathrm{Id}$, and exhibit explicit Fourier modes whose growth rate scales like
\[
Re \lambda_+(k)\sim c\,|k|\qquad\text{as }|k|\to\infty
\]
(for instance at $a=-2$), i.e.\ arbitrarily high frequencies amplify on arbitrarily short time scales.
This high-frequency growth implies that the linearized solution operator cannot be bounded on $H^s$, which already rules out Hadamard well-posedness.
We then upgrade this mechanism to the nonlinear system by a scaling/compactness contradiction argument: assuming a Sobolev well-posedness estimate, one can pass to a limiting problem that inherits the same unstable growth, leading to norm inflation and hence failure of continuous dependence.
This phenomenon is distinct from recent ill-posedness results for {viscous} Oldroyd--B-type systems in critical Besov classes (e.g. \cite{Li_2025}), where the velocity equation contains $-\Delta u$ and the stress evolves by transport/rotation.
Here the instability occurs already in the inviscid setting and persists {despite} the presence of the relaxation term $\varepsilon^{-2}(D(u)-\tau)$: the culprit is the first-order coupling through the rotation correction $Q(\tau,\nabla u)$, which enables high-frequency amplification around a constant-stress background.
To the best of our knowledge, this Sobolev-space Hadamard instability for the inviscid Euler--Oldroyd--B system (in the formulation \eqref{sys:oldroyd-b}) has not been previously isolated in the literature, and it provides a clear mathematical obstruction motivating the Voigt-type regularization \eqref{sys:oldroyd-bb} studied in the rest of the paper.

\subsubsection{Voigt regularization}
A natural way to control smoothing-driven instability is to introduce a {Voigt (Kelvin--Voigt) type}
regularization, i.e. to modify the time evolution by applying an elliptic or fractional operator to the
time derivative (schematically, replacing $\partial_t$ by $(I+\varepsilon \mathcal{A})\partial_t$ for a positive
self-adjoint elliptic $\mathcal{A}$). In incompressible fluid dynamics, the prototypical Navier--Stokes--Voigt
model \cite{KT} adds a term of the form $-\alpha^2 \Delta \partial_t u$, and can be interpreted as a Kelvin--Voigt
viscoelastic model; it has been used both as a physically meaningful viscoelastic closure and as a mathematically
favorable regularization that improves well-posedness and long-time behavior.
A notable feature of Voigt regularizations (in contrast to adding viscosity) is that they are typically
{conservative}: they regularize the conserved energy, preserve steady states, and avoid spurious boundary
layers, which makes them well suited for long-time dynamics and limiting procedures.
Beyond fluids, Voigt regularizations have also been exploited in magnetohydrodynamic contexts \cite{CP, LT} (e.g. as a magnetic
relaxation mechanism to construct equilibria as long-time limits) and in Boussinesq-type systems \cite{I}, where they yield
global regularity and convergence as the regularization is removed. 
Similarly, in the recent study of sea-ice dynamics, the ill-posedness of the original sea-ice rheology \cite{boutros-2} also leads to the study of the non-Newtonian stress tensor with Voigt regularization \cite{boutros-1}. 
These precedents motivate our use of Voigt/Kelvin--Voigt ideas as a principled regularization framework in the
present inverse setting.

\subsection{Main theorems}

Without loss of generality, we investigate only the case when
\begin{equation}
    \label{0-momentum}
    \int u \,dx = 0,
\end{equation}
which is an invariance and allows us to apply the Poincar\'e inequality:
\begin{equation}
    \label{poincare}
    \norm{u}{L^2} \lesssim \norm{\nabla u}{L^2}. 
\end{equation}

\begin{theorem}[Nonlinear ill-posedness]
\label{thm:ill-posedness}
System \eqref{sys:oldroyd-b} with arbitrary fixed $ \varepsilon $ is ill-posed in any Sobolev space $ H^s $, $ s \geq 3 $, in the sense of Hadamard. 
\end{theorem}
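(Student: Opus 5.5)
The plan has two stages. First we show that the system linearized around the constant state $(\bar u,\bar\tau)=(0,\tau_a)$, $\tau_a=a\,\mathrm{Id}$, has Fourier modes whose growth rate grows linearly in $|k|$. We then transfer this to the nonlinear system by a contradiction argument in the spirit of Rayleigh--Taylor/Kelvin--Helmholtz ill-posedness proofs.

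\textbf{Linear stage.} Note that $(0,\tau_a,p)$ is not an exact steady solution unless the relaxation term is accounted for. One option is to work with the time-dependent background $\bar\tau(t)=a e^{-t/\varepsilon^2}\mathrm{Id}$, which solves the stress equation with $\bar u=0$; for $\varepsilon$ fixed and short time intervals it is essentially constant. Alternatively, since the instability is high-frequency, the zeroth-order relaxation term is a bounded perturbation, and the constant background can be used on short time scales. Linearizing, with $\nabla\cdot\tau_a=0$, $Q(\tau_a,\nabla v)=a\big(\mathrm{Id}W-W\mathrm{Id}\big)+2abD(v)=2ab\,D(v)$, we obtain
\begin{equation*}
\partial_t v+\nabla q=\nabla\cdot \theta,\qquad \partial_t\theta+2ab\,D(v)=\varepsilon^{-2}(D(v)-\theta),\qquad \nabla\cdot v=0 .
\end{equation*}
On $\mathbb T^2$ we take $v=\hat v e^{ik\cdot x+\lambda t}$ with $\hat v\perp k$, and project the momentum equation onto $k^\perp$. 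The result is the dispersion relation
\begin{equation*}
\lambda\Big(\lambda+\varepsilon^{-2}\Big)+\tfrac12|k|^2\big(2ab-\varepsilon^{-2}\big)=0 .
\end{equation*}
When $2ab>\varepsilon^{-2}$ (for instance $a=-2$ with $b<0$ suitably chosen, matching the paper's normalization), this gives a root $\lambda_+(k)$ with $\mathrm{Re}\,\lambda_+\sim c|k|$. The linearized solution operator therefore satisfies $\|e^{tL}\|_{H^s\to H^s}\ge e^{c|k|t}$, which is unbounded for any fixed $t>0$. I will check the sign conventions carefully; the key point is that $Q$ contributes a first-order term that reverses the sign of the effective viscosity.

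\textbf{Nonlinear stage.} Suppose the nonlinear system were well posed in $H^s$ near $(0,\tau_a)$. That is, there exist $T,\delta,C$ such that data $\delta$-close to the background generate solutions on $[0,T]$ with
\begin{equation*}
\|(u,\tau-\bar\tau)(t)\|_{H^s}\le C\|(u_0,\tau_0-\tau_a)\|_{H^s}
\end{equation*}
and continuous dependence. We take data $(0,\tau_a)+\eta\,(v_0^k,\theta_0^k)$ given by the unstable eigenmode at frequency $k$, normalized in $H^s$. We then form the difference quotient $w^\eta=(U^\eta-\bar U)/\eta$. The uniform bound controls $w^\eta$ in $L^\infty_tH^s$, and the equation controls $\partial_t w^\eta$ in $H^{s-1}$. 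Aubin--Lions compactness (this is where $s\ge3$ is used: the quadratic terms are $O(\eta)$ in $H^{s-1}$ by the algebra property) lets us pass to the limit $\eta\to0$. The limit solves the linearized system with the unstable eigenmode data and obeys $\|w(t)\|_{H^s}\le C\|w_0\|_{H^s}$. Since the eigenmode data evolve exactly by $e^{\lambda_+ t}$, uniqueness for the linear problem then forces $e^{\mathrm{Re}\lambda_+(k)t}\le C$ for all $t\le T$ and all $k$. This contradicts $\mathrm{Re}\lambda_+(k)\to\infty$.

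The main obstacle is the compactness/uniqueness step. The limiting linear system is itself ill-posed, so its uniqueness must be obtained within the class of $L^\infty H^s$ solutions. I would get it by taking Fourier coefficients: each mode satisfies a $2\times2$ linear ODE, which has unique solutions. A second delicate point is making sure the background $\bar\tau$ is handled correctly given the relaxation term; I would use the exact decaying background $a e^{-t/\varepsilon^2}\mathrm{Id}$ and restrict to $T$ small, so that $2ab(t)>\varepsilon^{-2}$ persists.
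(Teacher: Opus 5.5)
Your strategy is essentially the paper's. The paper sets $\varepsilon=1$, replaces the non-stationary state $(0,\tau_a)$ by the exact background $e^{-t}\tau_a$ (working with $\rho=\tau-e^{-t}\tau_a$), and assumes a Lipschitz-type $H^s$ bound. It then rescales data of size $\delta$ and lets $\delta\to0$ by compactness, obtaining the linear system with coefficient $Q(e^{-t}\tau_a,\nabla\underline u)$ together with the inherited bound, and contradicts that bound with an explicit growing solution. Your Fourier-mode uniqueness argument for the limiting linear problem fills a step the paper uses without comment. Your formula $Q(\tau_a,\nabla v)=2ab\,D(v)$ is correct; the paper's relation $\lambda^2+\lambda+(a+\frac12)k^2=0$ corresponds to $b=-1$, $\varepsilon=1$.

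One step fails as written. You must use the exact background $a e^{-t/\varepsilon^2}\mathrm{Id}$, since the difference-quotient limit is the linearization about an actual solution. The limiting linear problem then has the time-dependent coefficient $2ab\,e^{-t/\varepsilon^2}$. So the frozen-coefficient eigenmode does not evolve exactly like $e^{\lambda_+t}$, and ``uniqueness forces $e^{\mathrm{Re}\lambda_+ t}\le C$'' does not follow.

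You need the paper's comparison step. Take $v=(0,1)^\top\alpha(t)e^{ikx}$, $\theta=\beta(t)\zeta e^{ikx}$ with $\zeta_{12}=\zeta_{21}=-i$. The mode reduces to $\alpha'=k\beta$, $\beta'+\varepsilon^{-2}\beta=k\,c(t)\alpha$, where $c(t)=\frac12(2ab\,e^{-t/\varepsilon^2}-\varepsilon^{-2})$. Work on an interval where $c(t)\ge c_0>0$; this requires $ab>0$ large, hence $b\neq0$. There the system is cooperative, so for positive data $(\alpha,\beta)$ dominates the solution of the system with $c\equiv c_0$. That solution grows like $e^{\mu(k)t}$ with $\mu(k)\sim\sqrt{c_0}\,k$. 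The paper does exactly this with $a=-2e$, where $c(t)=2e^{1-t}-\frac12\ge\frac32$ on $[0,1]$.

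Separately, fix the sign in your displayed dispersion relation. Eliminating $\theta$ gives $v_{tt}+\varepsilon^{-2}v_t=\frac12(\varepsilon^{-2}-2ab)\Delta v$, that is, $\lambda(\lambda+\varepsilon^{-2})+\frac12|k|^2(\varepsilon^{-2}-2ab)=0$. With your sign, $2ab>\varepsilon^{-2}$ would give damped oscillations. Also $a=0$ would come out unstable, contradicting the dissipative identity $\frac{d}{dt}(\frac12\|v\|^2+\frac{\varepsilon^2}{2}\|\theta\|^2)=-\|\theta\|^2$ at that state. Your stated criterion $2ab>\varepsilon^{-2}$ is the right one for the corrected relation.
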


The proof of linear ill-posedness of system \eqref{sys:oldroyd-b} is done in section \ref{subsec:linear-ill-posedness}. The nonlinear ill-posedness is done in section \ref{subsec:nonlinear-ill-posedness}.

\begin{theorem}[Long time existence]
\label{thm:long-time}
Consider initial data $ (u(0), \tau(0)) = (u_\mrm{in}, \tau_\mrm{in}) \in H^6(\mathbb T^2) $, satisfying
\begin{equation}
    \label{average-zero}
    \int_{\mathbb T^2} u_\mrm{in} \,dx = 0. 
\end{equation}
There exists $ \varepsilon_0 \in (0,1) $, depending on $ \norm{u_\mrm{in}, \tau_\mrm{in}}{H^6} $, such that for $ 0< \varepsilon <  \varepsilon_0 $, there exists a unique classical solution to the 2D Euler-Oldroyd-B system with Voigt regularization, i.e., system \eqref{sys:oldroyd-bb} for 
\begin{equation}
\label{thm-1-time}
    0 \leq  t \leq \frac{1}{\varepsilon^{2/3}},
\end{equation} 
satisfying that
\begin{equation}
    \label{thm-1-est}
    \begin{gathered}
    \sup_{0 \leq t \leq \varepsilon^{-2/3}} (\norm{u(t),\varepsilon \tau(t), \varepsilon\nabla\tau(s)}{H^6}^2 + \norm{\tau(t) - D(u)(t), \nabla (\tau(t) - D(u)(t))}{H^2}^2 ) \\
    + \int_0^{\varepsilon^{-2/3}} (\norm{\tau(t)}{H^6}^2 + \norm{\nabla u(t), \frac{\tau(t) - D(u)(t)}{\varepsilon}}{H^2}^2 ) \,dt < C,
    \end{gathered}
\end{equation}
where $ C \in (0,\infty) $ is a finite constant depending only on the initial data. 
\end{theorem}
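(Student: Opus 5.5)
The plan has three parts: local well-posedness for \eqref{sys:oldroyd-bb} at fixed $\varepsilon$, a priori estimates uniform in $\varepsilon$ on the time window \eqref{thm-1-time}, and a continuity argument that joins them.

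\textbf{Local theory.} The Voigt operator $(I-\Delta)$ makes $\partial_t\tau = (I-\Delta)^{-1}\big[-u\cdot\nabla\tau - Q(\tau,\nabla u) + \varepsilon^{-2}(D(u)-\tau)\big]$. The right-hand side gains two derivatives, so it loses no regularity: the $\tau$-equation is an ODE in $H^s$ once $u\in H^{s}$. The velocity equation is the Euler equation forced by $\nabla\cdot\tau\in H^{s-1}$ with $\tau$ one order smoother than needed. A standard Picard/energy scheme in $H^6\times H^7$ (or $H^6\times H^6$ after using the smoothing) therefore gives a unique local classical solution. It also gives a blow-up criterion in terms of the norms appearing in \eqref{thm-1-est}. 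Uniqueness follows from an $L^2$ difference estimate.

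\textbf{Uniform a priori estimates.} We work with the tightened system \eqref{sys:tightened-oldroyd} and the variables $(u,\sigma)$. Define a low-tier energy
\[
\mathcal E_{\mathrm{low}}=\|u\|_{H^2}^2+\|\sigma,\nabla\sigma\|_{H^2}^2,
\qquad
\mathcal D_{\mathrm{low}}=\|\nabla u\|_{H^2}^2+\varepsilon^{-2}\|\sigma\|_{H^2}^2 .
\]
Also define a high-tier energy $\mathcal E_{\mathrm{high}}=\|u,\varepsilon\tau,\varepsilon\nabla\tau\|_{H^6}^2$, with dissipation $\|\tau\|_{H^6}^2$ coming from the relaxation term. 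For the low tier, test $\partial^\alpha$ of the $u$-equation with $\partial^\alpha u$ and of the $\sigma$-equation with $\partial^\alpha\sigma$, for $|\alpha|\le2$. The cross terms $\int\nabla\cdot\sigma\cdot u$ and the terms $\int\partial_t(D(u)-\Delta D(u)):\sigma$ have to be handled. For the latter, we substitute $\partial_t u$ from the momentum equation, which turns it into terms like $\int(\Delta u+\nabla\cdot\sigma)\cdot\nabla(I-\Delta)\sigma$. These are absorbed by $\tfrac12\varepsilon^{-2}\|\sigma\|_{H^2}^2$ and $\tfrac12\|\nabla u\|_{H^2}^2$ after using that $\varepsilon$ is small and that $\|\nabla u\|_{H^4}$ is controlled by $\mathcal E_{\mathrm{high}}$. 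The result should be
\[
\frac{d}{dt}\mathcal E_{\mathrm{low}}+c\,\mathcal D_{\mathrm{low}}\lesssim \varepsilon^2\mathcal E_{\mathrm{high}}^{1/2}\mathcal D_{\mathrm{low}}+\sqrt{\mathcal E_{\mathrm{high}}}\,\mathcal D_{\mathrm{low}}\sqrt{\mathcal E_{\mathrm{low}}}+\dots
\]
Here Poincar\'e \eqref{poincare} is used for the mean-zero $u$, and the nonlinear terms $u\cdot\nabla\sigma$, $Q(\sigma+D(u),\nabla u)$ are bounded by $\sqrt{\mathcal E}\mathcal D$.

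For the high tier, work with \eqref{sys:oldroyd-bb} directly. The $H^6$ estimate for $u$ involves $\int\partial^6\nabla\cdot\tau\cdot\partial^6 u$. The $\varepsilon^2$-weighted $H^6\cap\dot H^7$ estimate for $\tau$ contributes $\int\partial^6 D(u):\partial^6\tau$ with the same structure, so the two cancel, the standard Oldroyd-B symmetry. The Voigt term is what gives $\varepsilon^2\|\nabla\tau\|_{H^6}^2$ control. The dangerous terms are $\varepsilon^2\int\partial^6 Q(\tau,\nabla u):\partial^6\tau$ and $\int\partial^6(u\cdot\nabla u)\cdot\partial^6u$. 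They are bounded by $\|\nabla u\|_{L^\infty}\mathcal E_{\mathrm{high}}$ plus $\varepsilon^2\|\tau\|_{H^6}\|\nabla u\|_{H^6}\|\tau\|_{W^{1,\infty}}$. The top-order derivative of $u$ is paired with the extra derivative of $\tau$ available through the Voigt term. This yields
\[
\frac{d}{dt}\mathcal E_{\mathrm{high}}+\|\tau\|_{H^6}^2\lesssim \big(\|\nabla u\|_{W^{1,\infty}}+\varepsilon^{-2}\cdots\big)\mathcal E_{\mathrm{high}} .
\]

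\textbf{Main obstacle and closing.} The hard step is that $\mathcal E_{\mathrm{high}}$ does not decay. The growth factor $\|\nabla u\|_{L^\infty}\lesssim\sqrt{\mathcal D_{\mathrm{low}}}$ is integrable in time only in $L^2_t$, so by Cauchy--Schwarz $\int_0^T\sqrt{\mathcal D_{\mathrm{low}}}\le T^{1/2}(\int\mathcal D_{\mathrm{low}})^{1/2}$. Combined with losses of order $\varepsilon^{\theta}$ from coupling terms where only $\varepsilon\tau$ is controlled at top order, Gr\"onwall keeps $\mathcal E_{\mathrm{high}}$ bounded exactly when $T\varepsilon^{2/3}\lesssim1$. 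My first attempt would interpolate the residual term $\varepsilon^{-1}\|\sigma\|$ against $\varepsilon\|\tau\|_{H^7}$, balancing powers to produce a bound like $\varepsilon^{2/3}T\,\mathcal E_{\mathrm{high}}$. This is where the exponent $-2/3$ should come from.

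With this in hand, a bootstrap closes the argument. Assume $\mathcal E_{\mathrm{high}}\le 2C_0$ and $\mathcal E_{\mathrm{low}}\le 2C_0$ on $[0,T]$ with $T\le\varepsilon^{-2/3}$. The low-tier inequality, with smallness coming from $\varepsilon<\varepsilon_0$, gives $\int\mathcal D_{\mathrm{low}}\le C$. The high-tier Gr\"onwall then improves both bounds to $\tfrac32C_0$. Continuation with the local theory finishes the proof of \eqref{thm-1-est}.
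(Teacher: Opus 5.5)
\textbf{Main gap: how the time scale is closed.} Your architecture matches the paper's: tightened variables in a low tier, and the original system in a high tier, with the $\partial^6u$ versus $\varepsilon^2\partial^6\tau$ cancellation and the Voigt term absorbing the top-order $Q$ term, then Gr\"onwall. However, the step that actually fixes the time scale fails as you wrote it.

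You control the Gr\"onwall exponent by $\int_0^T\|\nabla u\|_{L^\infty}\,dt\le T^{1/2}\bigl(\int_0^T\mathcal D_{\mathrm{low}}\bigr)^{1/2}$. This factor comes from the transport terms (e.g.\ $\int\partial^6(u\cdot\nabla u)\cdot\partial^6u$) and carries no power of $\varepsilon$. Moreover, $\int\mathcal D_{\mathrm{low}}$ is of the size of the data, which may be large. So you only get $\mathcal E_{\mathrm{high}}(T)\le\mathcal E_{\mathrm{high}}(0)e^{CT^{1/2}}$, i.e.\ $e^{C\varepsilon^{-1/3}}$ at $T=\varepsilon^{-2/3}$. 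No $\varepsilon^\theta$ gain from the $\tau$-coupling can repair this, so the bootstrap cannot return $\tfrac32C_0$. Your guessed origin of the exponent (interpolating $\varepsilon^{-1}\|\sigma\|$ against $\varepsilon\|\tau\|_{H^7}$) does not touch this term. Separately, no $\varepsilon^{-2}$ may survive in the high-tier growth rate: after testing with $\varepsilon^2\partial^6\tau$, every remaining $\tau$-coupled factor carries a positive power of $\varepsilon$, e.g.\ $\varepsilon\|\tau\|_{H^2}$.

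The missing idea is decay of the low-order velocity norm. The $L^2$ and vorticity estimates, together with Poincar\'e for mean-zero $u$, give $\frac{d}{dt}\|u,\omega\|_{L^2}^2+\mathfrak d_1\|u,\omega\|_{L^2}^2\lesssim\|\sigma\|_{H^1}^2$. Since $\sigma/\varepsilon\in L^2_tH^1$, this yields $\|u(t)\|_{H^1}^2\lesssim e^{-\mathfrak d_1 t}\|u_{\mathrm{in}}\|_{H^1}^2+\varepsilon^2\mathcal E_{\mathrm{low}}(t)$. You must then interpolate every $u$-dependent growth factor so that a power of this decaying norm appears. For example, use $\|\nabla u\|_{L^\infty}\lesssim\|u\|_{H^1}^{1/2}\|u\|_{H^3}^{1/2}$ rather than $\lesssim\|u\|_{H^3}$, and similarly for the lower-order Leibniz terms, as in \eqref{h-ene:002}. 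H\"older then gives
\[
\int_0^t\|u\|_{H^1}^{1/2}\|u\|_{H^3}^{1/2}\,ds\lesssim \mathcal E_{\mathrm{low}}^{1/4}(0)\,\mathcal E_{\mathrm{low}}^{1/4}(t)+\varepsilon^{1/2}t^{3/4}\,\mathcal E_{\mathrm{low}}^{1/2}(t),
\]
and $\varepsilon^{1/2}t^{3/4}\lesssim1$ is exactly $t\lesssim\varepsilon^{-2/3}$. The $\tau$-dependent factors only cost $\varepsilon t^{1/2}$ or $\varepsilon t^{3/4}$.

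\textbf{Secondary gap: the low tier at $|\alpha|=2$.} The term $\int\partial_t(I-\Delta)D(\partial^2u):\partial^2\sigma$ must be absorbed into $\frac{1}{2\varepsilon^2}\|\partial^2\sigma\|_{L^2}^2$, which costs $\varepsilon^2\|\partial_t\nabla^5u\|_{L^2}^2$. If you substitute the tightened momentum equation and treat $\Delta u$ and $\nabla\cdot\sigma$ separately, each piece needs $\|u\|_{H^7}$, which nothing controls. A pointwise bound $\|\nabla u\|_{H^4}\lesssim\mathcal E_{\mathrm{high}}^{1/2}$ gives neither the right derivative count nor time integrability. The $\nabla^7u$ contributions cancel only in the combination $\nabla\cdot D(u)+\nabla\cdot\sigma=\nabla\cdot\tau$. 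That is, you must use the original vorticity equation $\partial_t\omega=-u\cdot\nabla\omega+\nabla\times\nabla\cdot\tau$, which gives
\[
\varepsilon^2\|\partial_t\omega\|_{H^4}^2\lesssim\varepsilon^2\bigl(\|u\|_{H^3}^2\|u\|_{H^6}^2+\|\tau\|_{H^6}^2\bigr).
\]
This is time-integrable only thanks to the high-tier dissipation $\int_0^t\|\tau\|_{H^6}^2$. So your low-tier inequality must carry $\varepsilon^2\|\tau\|_{H^6}^2$ on the right, and the two tiers are coupled in both directions. This is how the paper closes: $\mathcal E_{\mathrm{low}}\le\mathfrak c_1(\mathcal E_{\mathrm{low}}(0)+1)^2+\mathfrak c_1\varepsilon^2(\mathcal E_{\mathrm{total}}+1)^4$, fed into the high-tier Gr\"onwall above.
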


    The proof of theorem \ref{thm:long-time} is done in section \ref{sec:long-time}, which is based on the \textit{a priori} estimates in sections \ref{sec:low-norm-est} and \ref{sec:high-norm-est}.

\begin{theorem}[High Weissenberg number limit] 
\label{thm:hw-limit} Under the same assumptions as in theorem \ref{thm:long-time}, for arbitrary $ T \in (0,\infty) $, there exists $ 0 < \varepsilon_T \leq \varepsilon_0 $, such that for $ \varepsilon < \varepsilon_T $ and $ \varepsilon \rightarrow 0^+ $, one has that, there exists $ v \in L^\infty (0,T;H^6(\mathbb T^2) $ such that
\begin{align}
    u & \rightarrow  v && \text{in} \ C(0,T;H^5(\mathbb T^2)), \\
    \tau & \rightarrow D(v) && \text{in} \ L^2(0,T;H^{2}(\mathbb T^2)).
\end{align}
Moreover, $ v $ satisfies the Navier-Stokes equations, i.e., 
\begin{equation}
    \partial_t v + v \cdot \nabla v + \nabla \pi = \nabla \cdot D(v), \qquad \nabla \cdot v = 0. 
\end{equation}
\end{theorem}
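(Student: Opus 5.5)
The plan is to derive Theorem \ref{thm:hw-limit} from the uniform bounds \eqref{thm-1-est} of Theorem \ref{thm:long-time} by a compactness argument, then identify the limit and upgrade subsequential convergence to convergence of the whole family. Fix $T$ and choose $\varepsilon_T \le \varepsilon_0$ with $\varepsilon_T^{-2/3} \ge T$, so $[0,T]$ lies inside the existence window for every $\varepsilon<\varepsilon_T$. On $[0,T]$, \eqref{thm-1-est} gives bounds independent of $\varepsilon$:
\[
u^\varepsilon \in L^\infty(0,T;H^6), \qquad \sigma^\varepsilon := \tau^\varepsilon - D(u^\varepsilon), \qquad \norm{\sigma^\varepsilon}{L^2(0,T;H^2)} \lesssim \varepsilon .
\]
The second bound already shows $\tau^\varepsilon - D(u^\varepsilon)\to 0$ in $L^2(0,T;H^2)$. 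So the stress convergence reduces to $D(u^\varepsilon)\to D(v)$ in $L^2H^2$, which will follow from $u^\varepsilon\to v$ in $C(0,T;H^5)$.

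For time compactness I will use the tightened momentum equation \subeqref{sys:tightened-oldroyd}{1}. Applying the Leray projector $\mathbb P$ gives
\[
\partial_t u^\varepsilon = \mathbb P\bigl(\Delta u^\varepsilon + \nabla\cdot\sigma^\varepsilon - u^\varepsilon\cdot\nabla u^\varepsilon\bigr).
\]
From the $H^6$ bound on $u^\varepsilon$, the first and last terms are bounded in $L^\infty(0,T;H^4)$. The term $\nabla\cdot \sigma^\varepsilon$ is bounded in $L^2(0,T;H^1)$, and even in $L^2 H^2$ since \eqref{thm-1-est} also controls $\nabla\sigma^\varepsilon$ in $H^2$. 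Hence $\partial_t u^\varepsilon$ is bounded in $L^2(0,T;H^1)$. By Aubin--Lions--Simon, with $H^6\Subset H^5\subset H^1$, the family $\{u^\varepsilon\}$ is precompact in $C(0,T;H^5)$. Along a subsequence $u^\varepsilon\to v$ in $C(0,T;H^5)$, and $u^\varepsilon\rightharpoonup^* v$ in $L^\infty(0,T;H^6)$, so $v\in L^\infty(0,T;H^6)$.

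To identify the limit, pass to the limit in the weak form of \subeqref{sys:tightened-oldroyd}{1}:
\begin{itemize}
\item $\nabla\cdot\sigma^\varepsilon\to0$ in $L^2(0,T;H^1)$;
\item the nonlinearity converges strongly because $u^\varepsilon\to v$ in $C(0,T;H^5)$.
\end{itemize}
Therefore $v$ solves $\partial_t v + v\cdot\nabla v + \nabla\pi = \Delta v$ with $\nabla\cdot v=0$. Since $\nabla\cdot D(v) = \tfrac12\Delta v$ when $\nabla\cdot v=0$, this is the stated Navier--Stokes system up to the normalization of viscosity, i.e.\ up to the factor in $\nabla\cdot\tau$ versus $D$. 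I will track that constant and absorb it into the statement's convention.

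It remains to pin down the initial data. The limit $v(0)=\lim u^\varepsilon(0)=u_{\mathrm{in}}$ holds by the $C(0,T;H^5)$ convergence, since the initial data do not depend on $\varepsilon$. Uniqueness of smooth solutions to 2D Navier--Stokes in this class then shows every subsequence has the same limit, so the whole family converges as $\varepsilon\to0^+$.

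The main obstacle I anticipate is the bound on $\partial_t u^\varepsilon$ in the Voigt setting. One must make sure it uses only the tightened velocity equation, which contains no $\partial_t\tau$, and not the stress equation, whose $\partial_t$ term carries $\varepsilon^{-2}\sigma$ and is only controlled in an $\varepsilon$-weighted norm. If the $L^2H^2$ bound on $\sigma$ were too weak, I would fall back to a dual space $H^{-1}$ for the time derivative, which still suffices for Aubin--Lions.
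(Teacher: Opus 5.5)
Your proposal is correct and follows essentially the paper's route. Like the paper, you choose $\varepsilon_T=\min\{\varepsilon_0,T^{-3/2}\}$ so that \eqref{thm-1-est} holds on $[0,T]$, bound $\partial_t u$ from the momentum equation, and apply Aubin--Lions--Simon to get $u\to v$ in $C(0,T;H^5)$. You then combine this with $\norm{\tau-D(u)}{L^2(0,T;H^2)}\lesssim\varepsilon$. The paper bounds $\partial_t u$ in $L^\infty(0,T;H^2)$, and your $L^2(0,T;H^1)$ bound works equally well. Your identification $v(0)=u_{\mathrm{in}}$ and the uniqueness argument that upgrades subsequential convergence to convergence of the whole family are details the paper omits.

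One correction: do not ``absorb'' the viscosity constant. That is not legitimate, since viscosities $1$ and $\tfrac12$ give genuinely different equations. Instead, pass to the limit directly in $\partial_t u+u\cdot\nabla u+\nabla p=\nabla\cdot\tau$ using $\tau\to D(v)$ in $L^2(0,T;H^2)$, as the paper does; this yields exactly $\nabla\cdot D(v)$. The $\Delta u$ appearing in \eqref{sys:tightened-oldroyd} should read $\nabla\cdot D(u)=\tfrac12\Delta u$.
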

\begin{proof}
    The proof of theorem \ref{thm:hw-limit} is done in section \ref{sec:hw-limit}.
\end{proof}

\section{Ill-posedness of the first order Oldroyd-b system \eqref{sys:oldroyd-b}}
\label{sec:ill-posedness}

\subsection{Linear ill-posedness}
\label{subsec:linear-ill-posedness}
In this subsection, we show that \eqref{sys:oldroyd-b} is linearly ill-posed in any Sobolev space, for any fixed $ \varepsilon $. Without loss of generality, consider the following system:
\begin{equation}
\label{sys:oldroyd-b-1}
    \begin{cases}
        \partial_t u + u\cdot \nabla u +\nabla p=\nabla \cdot \tau,\\
        \partial_t \tau + u\cdot \nabla \tau + Q(\tau, \nabla u) = D(u)-\tau, \\
        \nabla \cdot u=0.
    \end{cases}
\end{equation}
Let \begin{equation}\label{def:eta} \eta := \tau - \tau_a ,\end{equation}
where $ \tau_a $ is a constant tensor given by, for arbitrary $ a \in \mathbb R $, 
\begin{equation}
    \tau_a := a \begin{pmatrix}
        1 & 0 \\ 0 & 1
    \end{pmatrix}.
\end{equation}
Then $ (u,\eta) $ satisfies the following system:
\begin{equation}
\label{sys:oldroyd-b-12}
    \begin{cases}
        \partial_t u + u\cdot \nabla u +\nabla p=\nabla \cdot \eta,\\
        \partial_t \eta + u\cdot \nabla \eta + Q(\eta, \nabla u) + Q(\tau_a,\nabla u)= D(u)-\eta - \tau_a, \\
        \nabla \cdot u=0.
    \end{cases}
\end{equation}
The linearization of system \eqref{sys:oldroyd-b-12} around $ (\overline u = 0, \overline \eta = 0) $ is given by, using $ (u_l, \eta_l) $ to denote the linearized variable,
\begin{equation}
    \label{sys:linear-o-b-a}
    \begin{cases}
        \partial_t u_l + \nabla p_l = \nabla \cdot \eta_l, \\
        \partial_t \eta_l + Q( \tau_a, \nabla u_l) = D(u_l) - \eta_l-\tau_a, \\
        \nabla \cdot u_l = 0.
    \end{cases}
\end{equation}

We are looking for solutions to \eqref{sys:linear-o-b-a} with the form
\begin{equation}
\label{linear:ansatz}
u_l := (0, 1)^\top e^{\lambda t} e^{ikx}, \qquad p_l := Pe^{\lambda t}e^{ikx}, \qquad 
    \text{and} \quad \eta_l := (e^{-t} - 1) \tau_a  + e^{\lambda t}\Sigma e^{ikx}, 
\end{equation}
where $ \lambda \in \mathbb C $, $ k \in \mathbb Z $, $x \in \mathbb R $, $ P \in \mathbb C $, and $ \Sigma \in \mathbb C^{2\times 2} $, to be determined.

Direct calculation yields that
\begin{equation}
    \nabla u = \begin{pmatrix}
        0 & 0\\
        ik&0
    \end{pmatrix} e^{\lambda t} e^{ikx},
\end{equation}
therefore, 
\begin{equation}
    D(u)=\frac{\nabla u + (\nabla u)^T}{2}=\begin{pmatrix}
        0 & ik/2\\
        ik/2&0
    \end{pmatrix} e^{\lambda t} e^{ikx},
\end{equation}
and
\begin{equation}
    W(u)=\frac{\nabla u - (\nabla u)^T}{2}=\begin{pmatrix}
        0 & -ik/2\\
        ik/2&0
    \end{pmatrix} e^{\lambda t} e^{ikx}.
\end{equation}
Hence, after substituting \eqref{linear:ansatz} into \eqref{sys:linear-o-b-a}, one has that
\begin{align}
    i k P & = ik \Sigma_{11}, \\
    \lambda & = ik \Sigma_{21}, \\
    (\lambda + 1) \Sigma & = \begin{pmatrix}
         0&  ik(a+\frac{1}{2}) \\  ik(a+\frac{1}{2}) & 0
    \end{pmatrix}.
\end{align}
Solving the above system of equations leads to, for $ k\neq 0 $,
\begin{gather}
    \Sigma_{11} = \Sigma_{22} = P = 0, \\
    \Sigma_{21} = \Sigma_{12} = -i\frac{\lambda}{k},\\
    \label{eigen-value}
    \lambda^2 + \lambda + (a + \frac{1}{2}) k^2 = 0.
\end{gather}
That is,
\begin{equation}
    \lambda_\pm := \frac{-1\pm \sqrt{1-(4a+2)k^2}}{2}.
\end{equation}

In particular, for $ a = - 2 $, we find the following growing mode for $  k\neq 0 $:
\begin{align}
\label{unstable-eigenvalue}
    \lambda_+ (k) & = \frac{-1 + \sqrt{1+6k^2}}{2} \simeq \frac{\sqrt{6}}{2} k \geq k \qquad \text{for} \ \vert k\vert  \rightarrow \infty, \\
    \label{unstable-sigma}
    \Sigma(k) = \Sigma_+(k) & = \begin{pmatrix}
       0 & -i \frac{(-1 + \sqrt{1+6k^2})}{2k} \\ -i \frac{(-1 + \sqrt{1+6k^2})}{2k} & 0
    \end{pmatrix}, \qquad P = 0.
\end{align}
This implies the linear instability in the sense of Hadamard.

\subsection{Nonliear ill-posedness}\label{subsec:nonlinear-ill-posedness} To show the nonlinear ill-posedness of \eqref{sys:oldroyd-b-1},  we employ similar arguments as in \cite{Guo2011} for the Rayleigh-Taylor instability. However, in contrast to \cite{Guo2011}, the instability of system \eqref{sys:oldroyd-b-1} occurs at the nontrivial state, for some $ a \neq 0 $,
\begin{equation}
    \overline u = 0, \qquad \overline\tau = \tau_a. 
\end{equation}
To accommodate this fact, inspired by the linear analysis in section \ref{subsec:linear-ill-posedness}, let
\begin{equation}
    \rho:= \tau - e^{-t} \tau_a.
\end{equation}
Then system \eqref{sys:oldroyd-b-1} can be written as the following system of $ (u, \rho) $:
\begin{equation}
    \label{sys:nonlinear-illposedness}
    \begin{cases}
        \partial_t u + u\cdot \nabla u +\nabla p=\nabla \cdot \rho,\\
        \partial_t \rho + u\cdot \nabla \rho + Q(\rho, \nabla u) + Q(e^{-t}\tau_a,\nabla u) = D(u)-\rho, \\
        \nabla \cdot u=0.
    \end{cases}
\end{equation}
It is remained to show that the nonlinear system \eqref{sys:nonlinear-illposedness} is ill-posed. This is shown by contradiction. 

For any $ s \geq 3 $, 
assume that system \eqref{sys:nonlinear-illposedness} is well-posed in $ (u,\rho) \in L^\infty(0,T; H^s) $ for some $ T \in (0,\infty) $, and the following estimate holds:
\begin{equation}
    \label{est:assume-ill-posedness}
    \sup_{0\leq t \leq T} \norm{u(t),\rho(t)}{H^s} + \sup_{0\leq t \leq T}\norm{\partial_t u(t), \partial_t \sigma(t)}{H^{s-1}} \leq H(\norm{u_\mrm{in},\rho_\mrm{in}}{H^s}) \norm{u_\mrm{in},\rho_\mrm{in}}{H^s},
\end{equation}
where
$
    (u_\mrm{in}, \rho_\mrm{in}) = (u,\rho)\vert_{t=0}
$, $ H $ is a non-decreasing continuous function, $ H(0) < C $, and $ T $ is non-increasing with respect to $ \norm{u_\mrm{in},\rho_\mrm{in}}{H^s} $. 
For $ \delta \in (0,1) $, Consider
\begin{equation}
    \label{ill-posed:initial}
    (u_\mrm{in},\rho_\mrm{in}) = (\delta \underline u_\mrm{in},\delta \underline \rho_\mrm{in}),
\end{equation}
and
let
\begin{equation}
    \label{ill-posed:scaling}
    u_\delta: = \frac{1}{\delta} u , \qquad \rho_\delta:= \frac{1}{\delta} \rho, \qquad\text{and} \quad  p_\delta := \frac{1}{\delta}p.
\end{equation}
Then from \eqref{sys:nonlinear-illposedness}, one has that
\begin{equation}
\label{sys:scaled-illposed}
    \begin{cases}
        \partial_t u_\delta + \delta u_\delta\cdot \nabla u_\delta + \nabla p_\delta=\nabla \cdot \rho_\delta,\\
        \partial_t \rho_\delta + \delta u_\delta\cdot \nabla \rho_\delta + \delta Q (\rho_\delta, \nabla u_\delta) + Q(e^{-t}\tau_a,\nabla u_\delta) = D (u_\delta) -  \rho_\delta, \\
        \nabla \cdot u_\delta=0.
    \end{cases}
\end{equation}
Thanks to \eqref{est:assume-ill-posedness}, one can conclude that, without loss of generality, there exists $ \underline T > 1 $ and $ C \in (0,\infty) $, such that
\begin{equation}
    \sup_{0\leq t \leq \underline T}( \norm{u_\delta(t),\rho_\delta(t)}{H^s} + \norm{\partial_t u_\delta(t),\partial_t \rho_\delta(t)}{H^{s-1}} ) \leq C \norm{\underline u_\mrm{in},\underline \rho_\mrm{in}}{H^s}.
\end{equation}
Therefore, there exists $ (\underline u, \underline \rho) \in L^\infty(0,\underline T;H^s)\cap C(0,\underline T; H^{s-1}) $ such that, as $ \delta \rightarrow 0 $, by choosing a subsequence, 
\begin{equation}
    \begin{aligned}
        (u_\delta, \rho_\delta) & \overset{*}{\rightharpoonup} (\underline u,\underline \rho) && \text{in} ~ L^\infty(0,T;H^s), \\
        (u_\delta, \rho_\delta) & \rightarrow (\underline u,\underline \rho) && \text{in} ~ C(0,T;H^{s-1}).
    \end{aligned}
\end{equation}
Then it is easy to verify from \eqref{sys:scaled-illposed} that
\begin{equation}
    \label{sys:nonlinear-illposed-2}
    \begin{cases}
        \partial_t \underline u + \nabla \underline p = \nabla \cdot \underline \rho, \\
        \partial_t \underline \rho + Q(e^{-t} \tau_a, \nabla \underline u) = D(\underline u) - \underline\rho,
    \end{cases}
\end{equation} 
and 
\begin{equation}
    \label{est:nonlinear-ill-posedness-2}
    \sup_{0\leq t \leq 1} \norm{\underline u(t),\underline \rho(t)}{H^s} \leq C \norm{\underline u_\mrm{in},\underline \rho_\mrm{in}}{H^s}.
\end{equation}
System \eqref{sys:nonlinear-illposed-2} is very similar to system \eqref{sys:linear-o-b-a}. Inspired by the linear analysis, we look for solutions to system \eqref{sys:nonlinear-illposed-2} of the following form: for $ k \in \mathbb Z $ and $ x \in \mathbb R $,
\begin{equation}
    \label{ansatz_nonlinear}
    \underline u: = (0,1)^\top \alpha(t) e^{ikx}, \qquad \underline p := \beta(t) \pi e^{ikx}, \qquad \text{and} \quad \underline \rho := \beta(t) \zeta e^{ikx}, 
\end{equation}
for some $ \alpha, \beta \in C(0,1) $, $ \pi \in \mathbb C $, and $ \zeta \in \mathbb C^{2\times 2} $. Then one has that, repeating the arguments as in section \ref{subsec:linear-ill-posedness},
\begin{align}
    ik \pi & = ik \zeta_{11},\\
    \alpha'(t) & = \beta(t) ik \zeta_{21}, \\
    (\beta'(t) + \beta(t)) \zeta &= \begin{pmatrix}
        {0} & ik(e^{-t}a + \frac{1}{2})\alpha(t) \\ ik(e^{-t}a + \frac{1}{2})\alpha(t) &{0}
    \end{pmatrix}.
\end{align}
That is, we have 
\begin{equation}
\label{ode-001}
    \pi = \zeta_{11} = \zeta_{22} = 0 ,\qquad \alpha'(t) = ik \zeta_{21} \beta(t), \qquad (\beta'(t)+ \beta(t)) \zeta_{21} = ik (e^{-t}a + \frac{1}{2}) \alpha(t).
\end{equation}

Now we are ready to choose the initial date. To simplify the presentation, we only consider $ k > 1 $. Let 
\begin{equation}
\label{initial-data-nonlinear-illposed}
    a = -2 e ,\qquad \zeta_{21} = \zeta_{12} = - i, \qquad \alpha(0) = \frac{2}{k^s}, \qquad \beta(0) = \frac{2\lambda_+(k)}{k^{s+1}},
\end{equation}
where $ \lambda_+ (k) $ is given by \eqref{unstable-eigenvalue}. 
Then it is easy to verify that 
\begin{equation}
    \label{initial-data-nonlinear-illposed-2}
    \norm{\underline u_\mrm{in},\underline\rho_\mrm{in}}{H^s} < C < \infty, \qquad \text{independent of $ k$},
\end{equation}
and 
\begin{equation}
    \alpha'(t) = k \beta(t), \qquad \beta'(t) + \beta(t) =  k ( 2 e^{1-t}  - \frac{1}{2})\alpha(t).
\end{equation}
Notice that $ 2 e^{1-t} - \frac{1}{2} \geq \frac{3}{2}$ for $ 0 \leq t \leq 1 $ and it is easy to verify that for $ 0 \leq t \leq 1 $, 
\begin{equation}
    \alpha(t) > 0, \quad \beta(t) > 0,
\end{equation}
and
\begin{equation}
    \alpha'(t) = k \beta(t), \qquad \beta'(t) + \beta(t) \geq \frac{3}{2}k \alpha(t).
\end{equation}

Consider the following initial value problem:
\begin{equation}
    \alpha_1'(t) = k \beta_1(t), \quad \beta_1'(t) + \beta_1(t) = \frac{3}{2}k \alpha_1(t), \quad \alpha_1(0) = \frac{1}{k^s} < \alpha(0), \quad \beta_1(0) = \frac{\lambda_+(k)}{k^{s+1}} < \beta(0).
\end{equation}
Then,
\begin{equation}
    \alpha_1(t) = \frac{1}{k^3}e^{\lambda_+(k) t}, \qquad \beta_1(t) = \frac{\lambda_+(k)}{k^4} e^{\lambda_+(k) t}.
\end{equation}
Moreover, by comparison principle, one can verify that for $ 0 \leq t \leq 1 $ and large $ k $,
\begin{equation}
    \alpha(t) > \alpha_1(t) \geq \frac{e^{kt}}{k^s}, \qquad \beta(t) > \beta_1(t) \geq \frac{e^{kt}}{k^{s}}.
\end{equation}
This implies that 
\begin{equation}
    \norm{\underline u(1),\underline \rho(1)}{H^s} \gtrsim  e^{k} \rightarrow \infty \qquad \text{as} ~ k \rightarrow \infty,
\end{equation}
which is contradictory to \eqref{est:nonlinear-ill-posedness-2}. 
This finishes the proof of theorem \ref{thm:ill-posedness}.

\section{Low norm estimate for the tightened variables}
\label{sec:low-norm-est}

\subsection*{$L^2$ estimate}
Taking the $ L^2 $-inner product of \subeqref{sys:tightened-oldroyd}{1} with $ u $ and applying integration by parts in the resultant lead to 
\begin{equation}
\label{low-ene:001}
\frac{1}{2} \frac{d}{d t}\|u\|_{L^2}^2+\|\nabla u\|_{L^2}^2=-\int \sigma: D(u) \,  d x \leq \frac{1}{2} \norm{\nabla u}{L^2}^2 + \norm{\sigma}{L^2}^2. 
\end{equation}
Meanwhile, taking the $L^2$-inner product of \subeqref{sys:tightened-oldroyd}{2} with $\sigma$ and applying integration by parts yield
\begin{equation}
\label{low-ene:002}
\begin{gathered}
\frac{1}{2}\frac{d}{dt}\|\sigma, \nabla \sigma \|_{L^2}^2 + \frac{1}{\varepsilon^2}\|\sigma\|_{L^2}^2 
= - \int_{\mathbb{R}^2} ( \partial_t D(u) - \partial_t \Delta D(u) ):\sigma\,dx \\  
 - \int u \cdot \nabla D(u) : \sigma \,dx  - \int Q(\sigma + D(u), \nabla u) : \sigma\,dx \leq \frac{1}{2\varepsilon^2} \norm{\sigma}{L^2}^2 \\
 + \varepsilon^2 C (\norm{\partial_t D(u) - \partial_t \Delta D(u), u\cdot \nabla D(u),Q(\sigma+D(u),\nabla u)}{L^2}^2).
\end{gathered}
\end{equation}
Therefore, integrating \eqref{low-ene:001} and \eqref{low-ene:002} in time yields
\begin{equation}
    \label{low-ene:003}
    \begin{gathered}
    \sup_{0\leq s \leq t} \norm{u(s),\sigma(s), \nabla \sigma(s)}{L^2}^2 + \int_0^t \norm{\nabla u(s), \frac{\sigma(s)}{\varepsilon}}{L^2}^2 \,ds \lesssim \norm{u_\mrm{in},\sigma_\mrm{in}}{L^2}^2 \\ + \varepsilon^2 \underbrace{\int_0^t \norm{\frac{\sigma}{\varepsilon},\partial_t D(u) - \partial_t \Delta D(u), u\cdot \nabla D(u),Q(\sigma+D(u),\nabla u)}{L^2}^2 \,ds}_{=:\mathcal N_1(t)}. 
    \end{gathered}
\end{equation}

\subsection*{$H^1$ estimate} Let the vorticity of $ u $ be
\begin{equation}
    \label{low-ene:004}
    \omega:= \nabla \times u. 
\end{equation}
Then from \subeqref{sys:tightened-oldroyd}{1}, since we are in the two spatial dimensions. one has that
\begin{equation}
    \label{low-ene:005}
    \partial_t \omega + u \cdot \nabla \omega = \Delta \omega + \nabla \times \nabla \cdot \sigma. 
\end{equation}
Taking the $ L^2 $-inner product of \eqref{low-ene:005} with $ \omega $ and applying integration by parts lead to 
\begin{equation}
    \label{low-ene:006}
    \dfrac{1}{2} \dfrac{d}{dt} \norm{\omega}{L^2}^2 + \norm{\nabla \omega}{L^2}^2 = - \int (\nabla\cdot \sigma) \cdot \nabla^\top \omega\,dx \leq \frac{1}{2}\norm{\nabla \omega}{L^2}^2 + \norm{\nabla \sigma}{L^2}^2. 
\end{equation}
Similarly, 
applying $\partial \in \lbrace \partial_x, \partial_y \rbrace $ to \subeqref{sys:tightened-oldroyd}{2} yields
\begin{equation}
    \label{low-ene:007}
    \begin{gathered}
    \partial_t (\partial \sigma - \Delta \partial \sigma) + u\cdot \nabla \partial \sigma + \partial u \cdot \nabla \sigma + \frac{1}{\varepsilon^2} \partial \sigma \\
    = - \partial_t D (\partial u) + \partial_t \Delta D(\partial u) - \partial \lbrack u\cdot \nabla D(u)\rbrack - \partial\lbrack Q(\sigma+D(u),\nabla u) \rbrack.
    \end{gathered}
\end{equation}
Taking the $ L^2 $-inner product of \eqref{low-ene:007} with $ \partial \sigma $ and applying integration by parts yield
\begin{equation}
    \label{low-ene:008}
    \begin{gathered}
        \dfrac{1}{2}\dfrac{d}{dt} \norm{\partial \sigma, \nabla \partial \sigma}{L^2}^2 + \frac{1}{\varepsilon^2}\norm{\partial\sigma}{L^2}^2 = - \int (\partial_t D(\partial u) - \partial_t \Delta D(\partial u) ) : \partial \sigma \,dx \\
        - \int \lbrace \partial u \cdot \nabla \sigma + \partial \lbrack u \cdot \nabla D(u) \rbrack + \partial \lbrack Q(\sigma + D(u), \nabla u) \rbrack  \rbrace : \partial \sigma \,dx \\
        \leq \frac{1}{2 \varepsilon^2} \norm{\partial\sigma}{L^2}^2 + \varepsilon^2 \norm{\substack{\partial_t D(\partial u) - \partial_t \Delta D(\partial u), \partial u\cdot \nabla \sigma, \\ \partial \lbrack u \cdot \nabla D(u) \rbrack, \partial \lbrack Q(\sigma + D(u),\nabla u)\rbrack}}{L^2}^2. 
    \end{gathered}
\end{equation}
Therefore, integrating \eqref{low-ene:006} and \eqref{low-ene:008} in time and summing over all $ \partial \in \lbrace \partial_x, \partial_y \rbrace $ imply
\begin{equation}
    \label{low-ene:009}
    \begin{gathered}
        \sup_{0\leq s \leq t} \norm{\nabla u(s),\omega(s), \nabla \sigma(s), \nabla^2 \sigma
        (s)}{L^2}^2 + \int_0^t \norm{\nabla^2 u(s),\nabla \omega(s), \frac{\nabla\sigma(s)}{\varepsilon}}{L^2}^2\,ds \\ \lesssim \norm{\omega_\mrm{in},\nabla\sigma_\mrm{in}}{L^2}^2 
        + \varepsilon^2 \underbrace{\int_0^t \norm{\substack{\frac{\nabla \sigma}{\varepsilon},\partial_t D(\nabla u) - \partial_t \Delta D(\nabla u), \nabla u\cdot \nabla \sigma, \\ \nabla \lbrack u \cdot \nabla D(u) \rbrack,  \nabla \lbrack Q(\sigma + D(u),\nabla u)\rbrack}}{L^2}^2 \,ds}_{=:\mathcal N_2(t)}, 
    \end{gathered}
\end{equation}
where we have applied the following estimates:
\begin{equation}
\label{low-ene:010}
    \norm{\nabla^{s+1} u}{L^2} \lesssim \norm{\nabla^s \omega}{L^2}, \qquad \text{for} \ \forall s. 
\end{equation}

\subsection*{$H^2$ estimate}
Applying $ \partial \in \lbrace \partial_x, \partial_y \rbrace $ in \eqref{low-ene:005} leads to
\begin{equation}
    \label{low-ene:011}
    \partial_t \partial \omega + u \cdot \nabla  \partial \omega + \partial u \cdot \nabla \omega = \Delta \partial \omega + \nabla \times \nabla \cdot \partial \sigma. 
\end{equation}
Taking the $ L^2 $-inner product of \eqref{low-ene:011} with $ \partial \omega $ and applying integration by parts yield
\begin{equation}
    \label{low-ene:012}
    \begin{gathered}
    \dfrac{1}{2} \dfrac{d}{dt} \norm{\partial \omega}{L^2}^2 + \norm{\nabla \partial \omega}{L^2}^2 = - \int (\partial u \cdot \nabla) \omega \cdot \partial \omega \,dx  - \int  (\nabla\cdot \partial \sigma) \cdot \nabla^\top \partial \omega \,dx \\
    \leq \frac{1}{4} \norm{\nabla \partial \omega}{L^2}^2 + \norm{ \nabla^2 \sigma}{L^2}^2 + \underbrace{\norm{\nabla \omega}{L^4}^2}_{\mathclap{\lesssim \norm{\nabla \omega}{L^2} \norm{\nabla^2 \omega}{L^2}}} \norm{\nabla u}{L^2} \\
    \leq \frac{1}{2} \norm{\nabla^2 \omega}{L^2}^2 + \norm{\nabla^2 \sigma}{L^2}^2 + C \norm{\nabla u}{L^2}^2 \norm{\nabla \omega}{L^2}^2,
    \end{gathered}
\end{equation}
for some constant $ C \in (0,\infty) $, where we have applied the Ladyzhenskaya inequality; see, e.g., \cite{ConstantinFoias1988,Temam1975}.
Similarly, applying $ \partial \in \lbrace \partial_x, \partial_y \rbrace $ to \eqref{low-ene:007} yields
\begin{equation}
    \label{low-ene:013}
    \begin{gathered}
    \partial_t ( \partial^2 \sigma - \Delta \partial^2 \sigma  ) + u\cdot \nabla \partial^2 \sigma + 2 \partial u \cdot \nabla \partial \sigma + \partial^2 u \cdot \nabla \sigma + \frac{1}{\varepsilon^2} \partial^2 \sigma \\
    = - \partial_t D (\partial^2 u) + \partial_t \Delta D(\partial^2 u)  - \partial^2 \lbrack u\cdot \nabla D(u)\rbrack - \partial^2\lbrack Q(\sigma+D(u),\nabla u) \rbrack.
    \end{gathered}
\end{equation}
Taking the $ L^2 $-inner product of \eqref{low-ene:013} with $ \partial^2 \sigma $ and applying integration by parts lead to
\begin{equation}
    \label{low-ene:014}
    \begin{aligned}
        & \dfrac{1}{2} \dfrac{d}{dt} \norm{\partial^2 \sigma, \nabla \partial^2 \sigma}{L^2}^2 + \frac{1}{\varepsilon^2} \norm{\partial^2 \sigma}{L^2}^2 \\ & = - \int \lbrace \substack{\partial_t D (\partial^2 u) - \partial_t \Delta D(\partial^2 u)+2 \partial u \cdot \nabla \partial \sigma + \partial^2 u \cdot \nabla \sigma\\ +\partial^2 \lbrack u\cdot \nabla D(u)\rbrack + \partial^2\lbrack Q(\sigma+D(u),\nabla u) \rbrack} \rbrace : \partial^2 \sigma \,dx \\
        &\leq \frac{1}{2\varepsilon^2}\norm{\partial^2 \sigma}{L^2}^2 + \varepsilon^2 \norm{\substack{\partial_t D (\partial^2 u)- \partial_t \Delta D(\partial^2 u)+2 \partial u \cdot \nabla \partial \sigma + \partial^2 u \cdot \nabla \sigma\\ +\partial^2 \lbrack u\cdot \nabla D(u)\rbrack + \partial^2\lbrack Q(\sigma+D(u),\nabla u) \rbrack}}{L^2}^2. 
    \end{aligned}
\end{equation}
Therefore, integrating \eqref{low-ene:012} and \eqref{low-ene:014} in time and summing over all $ \partial \in \lbrace \partial_x, \partial_y \rbrace $ imply
\begin{equation}
    \label{low-ene:015}
    \begin{gathered}
        \sup_{0\leq s \leq t} \norm{\nabla^2 u(s), \nabla \omega(s), \nabla^2 \sigma(s), \nabla^3 \sigma(s)}{L^2}^2 + \int_0^t \norm{\nabla^3 u(s),\nabla^2 \omega(s), \frac{\nabla^2 \sigma(s)}{\varepsilon}}{}^2\,ds\\
        \lesssim \norm{\nabla \omega_\mrm{in},\nabla^2 \sigma_\mrm{in}}{L^2}^2 + \sup_{0\leq s \leq t}\norm{\nabla u(s)}{L^2}^2 \int_0^t \norm{\nabla \omega(s)}{L^2}^2 \,ds \\
        + \varepsilon^2 \underbrace{\int_0^t \norm{\substack{\frac{\nabla^2 \sigma}{\varepsilon}, \partial_t D (\nabla^2 u) - \partial_t \Delta D(\nabla^2 u),  \nabla u \cdot \nabla \nabla \sigma,  \nabla^2 u \cdot \nabla \sigma,\\ \partial^2 \lbrack u\cdot \nabla D(u)\rbrack,  \nabla^2\lbrack Q(\sigma+D(u),\nabla u) \rbrack }}{L^2}^2 \,ds}_{=:\mathcal N_3(t)} \\
        \lesssim \norm{\nabla \omega_\mrm{in},\nabla^2 \sigma_\mrm{in}}{L^2}^2 + \norm{\omega_\mrm{in}, \nabla \sigma_\mrm{in}}{L^2}^4 + \varepsilon^4 \mathcal N_2^2(t) + \varepsilon^2 \mathcal N_3(t),
    \end{gathered}
\end{equation}
where we have substituted \eqref{low-ene:009}.

\subsection*{Estimates of $ \mathcal N_j $, $ j = 1,2,3 $ and summary of the low norm estimate}

Let the low norm be
\begin{equation}
    \label{def:low-norm}
    \mathcal E_\mrm{low}(t):= \sup_{0\leq s \leq t} \norm{u(s), \sigma(s),\nabla \sigma(s)}{H^2}^2 + \int_0^t \norm{\nabla u(s), \frac{\sigma(s)}{\varepsilon}}{H^2}^2 \,ds,
\end{equation}
and the high norm be
\begin{equation}
    \label{def:high-norm}
    \mathcal E_\mrm{high}(t) := \sup_{0\leq s \leq t} \norm{u (s), \varepsilon \tau (s),\varepsilon \nabla \tau (s)}{H^{6}}^2 + \int_0^t \norm{\tau(s)}{H^{6}}^2\,ds.
\end{equation}
Then the total energy is given by
\begin{equation}
    \label{def:total-norm}
    \mathcal E_\mrm{total}(t) := \mathcal E_\mrm{low}(t) + \mathcal E_\mrm{high}(t). 
\end{equation}

We focus on the estimate of $ \mathcal N_3 $, while the estimates of $ \mathcal N_j $, $ j = 1,2 $, follow similarly. 

Similar to \eqref{low-ene:005}, one can calculate from \subeqref{sys:oldroyd-bb}{1} that
\begin{equation}
\label{n-ene:001}
    \partial_t \omega = - u \cdot \nabla \omega + \nabla\times \nabla \cdot  \tau.
\end{equation}
Then one has that
\begin{equation}
    \label{n-ene:002}
    \begin{gathered}
        \norm{\partial_t D(\nabla^2 u) - \partial_t \Delta D(\nabla^2 u)}{L^2} \lesssim \norm{\partial_t \omega}{H^{4}} \lesssim \norm{u \cdot \nabla \omega}{H^4} + \norm{\tau}{H^6} \\
        \lesssim \norm{u}{H^3}\norm{u}{H^6} + \norm{\tau}{H^6}.
    \end{gathered}
\end{equation}
Similarly, one can calculate that
\begin{equation}
    \label{n-ene:003}
    \norm{\partial^2 \lbrack u\cdot \nabla D(u)\rbrack}{L^2} \lesssim \norm{u}{H^3} \norm{u}{H^6}.
\end{equation}
Furthermore, substituting $ \sigma = \tau - D(u) $ from \eqref{def:tight-variable} yields
\begin{equation}
    \label{n-ene:004}
    \begin{gathered}
        \norm{\nabla u \cdot \nabla \nabla \sigma, \nabla^2 u \cdot \nabla \sigma,  \nabla^2\lbrack Q(\sigma+D(u),\nabla u)\rbrack}{L^2} \\
        \lesssim \norm{\nabla u \cdot \nabla \nabla (\tau - D(u)), \nabla^2 u \cdot \nabla (\tau - D(u)), \nabla^2\lbrack Q(\tau,\nabla u)\rbrack}{L^2}\\
        \lesssim \norm{u}{H^3}^2 + \norm{\tau}{H^2}\norm{u}{H^3}.
    \end{gathered}
\end{equation}
Therefore, one has that
\begin{equation}
    \label{n-ene:005}
    \begin{gathered}
        \mathcal N_3 \lesssim \int_0^t \biggl(\norm{\frac{\sigma(s)}{\varepsilon}}{H^2}^2+ \norm{\tau(s)}{H^6}^2 + \norm{u(s)}{H^3}^2\norm{u(s)}{H^6}^2  + \norm{\tau (s)}{H^2}^2\norm{u(s)}{H^3}^2 \biggr) \,ds \\
        \lesssim \mathcal E_\mrm{total} + \mathcal E_\mrm{total}^2,
    \end{gathered}
\end{equation}
where we have used the Poincar\'e inequality \eqref{poincare}.  Repeating the same arguments for $ \mathcal N_1 $ and $ \mathcal N_2 $ yields
\begin{equation}
    \label{est:nonlinear-low}
    \mathcal N_i(t) \lesssim \mathcal E_\mrm{total}(t) + \mathcal E_\mrm{total}^2(t). 
\end{equation}
Therefore, one can conclude from \eqref{low-ene:003}, \eqref{low-ene:009}, and \eqref{low-ene:015} that
\begin{equation}
    \label{low-ene:total}
    \mathcal E_\mrm{low}(t) \leq \mathfrak c_1 (\mathcal E_\mrm{low}(0) + 1)^2 + \mathfrak c_1 \varepsilon^2 (\mathcal E_\mrm{total}(t) + 1)^4,
\end{equation}
for some constant $ \mathfrak c_1 \in (0,\infty) $. 

\subsection*{Decay estimate}
We conclude this section by preparing the decay estimate of the low norm. 

From \eqref{low-ene:001} and \eqref{low-ene:006}, 
one has that
\begin{equation}
    \label{low-decay:001}
    \begin{gathered}
        \dfrac{d}{dt} \norm{u, \omega}{L^2}^2 + \norm{\nabla u, \nabla\omega}{L^2}^2 \lesssim \varepsilon^2 \norm{{\frac{\sigma}{\varepsilon}}}{H^1}^2. 
    \end{gathered}
\end{equation}
Applying the Poincar\'e inequality, one can conclude from \eqref{low-decay:001} that there exists $ \mathfrak d_1 \in (0,\infty) $ such that
\begin{equation}
    \label{low-decay:002}
    \dfrac{d}{dt} \norm{u, \omega}{L^2}^2 + \mathfrak d_1 \norm{u, \omega}{L^2}^2 \lesssim \varepsilon^2 \norm{{\frac{\sigma}{\varepsilon}}}{H^1}^2.
\end{equation}
Then direct calculation yields that
\begin{equation}
    \label{low-decay:003}
    \begin{gathered}
    \norm{u(t)}{H^1}^2 \lesssim e^{-\mathfrak d_1 t} \norm{u_\mrm{in}}{H^1}^2 + \varepsilon^2 \int_0^t e^{\mathfrak d_1 (s-t)} \norm{\frac{\sigma(s)}{\varepsilon}}{H^1}^2 \,ds \\
    \lesssim e^{-\mathfrak d_1 t} \norm{u_\mrm{in}}{H^1}^2 + \varepsilon^2 \mathcal E_\mrm{low}(t).
    \end{gathered}
\end{equation}

\section{High norm estimate for the relaxed variables}
\label{sec:high-norm-est}

\subsection*{$ H^6 $ estimate}
Now we return to the relaxed system \eqref{sys:oldroyd-bb}. Let $ \partial \in \lbrace \partial_x, \partial_y \rbrace $. To shorten our presentation, we hereafter omit the combinatorial constants arising from the Leibniz rule. Applying $ \partial^6 $ to system \eqref{sys:oldroyd-bb} yields
\begin{equation}
    \label{sys:high-norm}
    \begin{cases}
        \partial_t \partial^6 u + u \cdot \nabla \partial^6 u + \nabla \partial^6 p - \nabla \cdot \partial^6 \tau = - \sum_{j=1}^{6}\partial^j u \cdot \nabla \partial^{6-j} u, \\
        \partial_t ( \partial^6 \tau - \Delta \partial^6 \tau) + u \cdot \nabla \partial^6 \tau 
        - \frac{1}{\varepsilon^2} (D(\partial^6 u)  - \partial^6 \tau) \\
        \qquad\qquad = - \sum_{j=1}^6 \partial^j u \cdot \nabla \partial^{6-j} \tau - \partial^6 Q(\tau, \nabla u),\\
        \nabla \cdot \partial^6 u = 0.
    \end{cases}
\end{equation}

Taking the $ L^2 $-inner product of \subeqref{sys:high-norm}{1} with $ \partial^6 u $ and \subeqref{sys:high-norm}{2} with $ \varepsilon^2  \partial^6 \tau $, respectively, summing all resulting equations, and applying integration by parts lead to
\begin{equation}
    \label{h-ene:001}
    \begin{aligned}
        & \frac{1}{2}\dfrac{d}{dt} \norm{\partial^6 u, \varepsilon \partial^6 \tau, \varepsilon \nabla \partial^6 \tau}{L^2}^2 + \norm{\partial^6 \tau}{L^2}^2 \\
        & = - \sum_{j=1}^6 \int \partial^j u \cdot \nabla \partial^{6-j} u \cdot \partial^6 u  \, dx 
        - \sum_{j=1}^6 \int  \varepsilon^2 \partial^j u \cdot \nabla \partial^{6-j} \tau : \partial^6 \tau \,dx \\
        & \qquad - \int \varepsilon^2 \partial^6 Q(\tau, \nabla u): \partial^6 \tau \,dx =: J_1 + J_2 + J_3. 
    \end{aligned}
\end{equation}
To calculate $ J_1 $, one can write $ J_1 $ as
\begin{equation}
    \label{h-ene:002}
    \begin{aligned}
    J_1 & = - \biggl( \sum_{j=1}^{3} + \sum_{j=4}^{6}\biggr) \int \partial^j u \cdot \nabla \partial^{6-j} u \cdot \partial^6 u\, dx 
    \lesssim \int \vert \nabla u \vert \vert  \nabla^{6} u \vert \vert \nabla^6 u \vert \,dx \\
    & \qquad +\int \vert \nabla^2 u \vert \vert  \nabla^{5} u \vert \vert \nabla^6 u \vert \,dx + \int \vert \nabla^3 u \vert \vert  \nabla^{4} u \vert \vert \nabla^6 u \vert \,dx\\
    & \lesssim \underbrace{\norm{\nabla u}{L^\infty}}_{\mathclap{\lesssim \norm{u}{H^1}^{1/2} \norm{u}{H^3}^{1/2}}} \norm{\nabla^6 u}{L^2}^2 + \underbrace{\norm{\nabla^2 u}{L^\infty}}_{{\lesssim \norm{u}{H^1}^{1/2}\norm{u}{H^3}^{1/6}\norm{u}{H^6}^{1/3}}} \underbrace{\norm{\nabla^5 u}{L^2}}_{{\norm{u}{H^3}^{1/3} \norm{u}{H^6}^{2/3}}} \norm{\nabla^6 u}{L^2} \\
    & \qquad + \underbrace{\norm{\nabla^3 u}{L^2}}_{\norm{u}{H^1}^{1/2} \norm{u}{H^3}^{1/6} \norm{u}{H^6}^{1/3}} \underbrace{\norm{\nabla^4 u}{L^\infty}}_{\norm{u}{H^3}^{1/3} \norm{u}{H^6}^{2/3}} \norm{\nabla^6 u}{L^2}  \lesssim  \norm{u}{H^1}^{1/2} \norm{u}{H^3}^{1/2} \mathcal E_\mrm{high}, 
    \end{aligned} 
\end{equation}
where we have applied the Gagliardo–Nirenberg interpolation. 
To calculate $ J_2 $, one can write $ J_2 $ as
\begin{equation}
    \label{h-ene:003}
    \begin{aligned}
        J_2 & = - \biggl( \sum_{j=1}^{3} + \sum_{j=4}^{6}\biggr) \int \varepsilon^2 \partial^j u \cdot \nabla \partial^{6-j} \tau : \partial^6 \tau \, dx =: J_{2,1} + J_{2,2}. 
    \end{aligned}
\end{equation}
Similarly as in \eqref{h-ene:002}, one can verify that
\begin{equation}
    \label{h-ene:004}
    \begin{gathered}
    J_{2,1} \lesssim \int \vert \partial u \vert \vert \varepsilon \nabla\partial^5 \tau\vert \vert \varepsilon \partial^6 \tau \vert \,dx + \int \vert \partial^2 u \vert \vert \varepsilon \nabla\partial^4 \tau\vert \vert \varepsilon \partial^6 \tau \vert \,dx \\
    + \int \vert \partial^3 u \vert \vert \varepsilon \nabla\partial^3 \tau\vert \vert \varepsilon \partial^6 \tau \vert \,dx \\
    \lesssim  \norm{u}{H^1}^{1/2} (\norm{u}{H^3}^{1/2} + \norm{u}{H^3}^{1/6} \norm{\varepsilon \tau}{H^3}^{1/3})    \mathcal E_\mrm{high} \\
    \lesssim \norm{u}{H^1}^{1/2} (\norm{u}{H^3}^{1/2} + \varepsilon^{1/2} \norm{\tau}{H^6}^{1/2}) \mathcal E_\mrm{high}.
    \end{gathered}
\end{equation}
Meanwhile, for $J_{2,2}$, we have
\begin{equation}
    \label{h-ene:005}
    \begin{aligned}
    J_{2,2} & = - \int \varepsilon^2 \partial^4 u \cdot \nabla \partial^{2} \tau : \partial^6 \tau \, dx - \int \varepsilon^2 \partial^5 u \cdot \nabla \partial \tau : \partial^6 \tau \, dx \\
    & \qquad - \int \varepsilon^2 \partial^6 u \cdot \nabla  \tau : \partial^6 \tau \, dx\\
    &\overset{\mathclap{\text{integration}}}{\underset{\mathclap{\text{by parts}}}{=}} \quad \int \varepsilon^2 \partial^5 u \cdot \nabla \partial \tau : \partial^6 \tau \, dx + \int \varepsilon^2 \partial^4 u \cdot \nabla \partial \tau : \partial^7 \tau \, dx \\
    & \qquad - \int \varepsilon^2 \partial^5 u \cdot \nabla \partial \tau : \partial^6 \tau \, dx  - \int \varepsilon^2 \partial^6 u \cdot \nabla  \tau : \partial^6 \tau \, dx \\
    & \lesssim \varepsilon \int \vert \nabla^5 u \vert \vert   \nabla^2 \tau \vert \vert \varepsilon \nabla^6 \tau \vert \,dx + \varepsilon \int \vert \nabla^4 u \vert \vert   \nabla^2 \tau \vert \vert \varepsilon \nabla^7 \tau \vert \,dx \\
    & \qquad
     + \varepsilon \int \vert \nabla^6 u \vert \vert   \nabla \tau \vert \vert \varepsilon \nabla^6 \tau \vert \,dx 
    \lesssim \varepsilon \norm{\tau}{H^2} \mathcal E_\mrm{high}.
    \end{aligned}
\end{equation}
Thus we have that
\begin{equation}
    \label{h-ene:005}
    J_2 \lesssim \norm{u}{H^1}^{1/2} (\norm{u}{H^3}^{1/2} + \varepsilon^{1/2} \norm{\tau}{H^6}^{1/2})   \mathcal E_\mrm{high} + \varepsilon \norm{\tau}{H^2} \mathcal E_\mrm{high}. 
\end{equation}
To calculate $ J_3 $, since $ Q(\cdot, \cdot) $ is bilinear, one can write $ J_3 $ as
\begin{equation}
    \label{h-ene:006}
    \begin{aligned}
        J_3 & = - \sum_{j=1}^6 \int \varepsilon^2 Q(\partial^j \tau, \nabla \partial^{6-j} u):\partial^6 \tau \,dx  - \int \varepsilon^2 Q(\tau, \nabla \partial^6 u) : \partial^6 \tau \,dx \\
        & =: J_{3,1} + J_{3,2}. 
    \end{aligned}
\end{equation}
The estimate of $ J_{3,1} $ follows the same arguments as for the $J_2$ estimate, i.e.,
\begin{equation}
    \label{h-ene:007}
    J_{3,1} \lesssim \norm{u}{H^1}^{1/2} (\norm{u}{H^3}^{1/2} + \varepsilon^{1/2} \norm{\tau}{H^6}^{1/2})  \mathcal E_\mrm{high} + \varepsilon \norm{\tau}{H^2} \mathcal E_\mrm{high}. 
\end{equation}
To estimate $ J_{3,2} $, applying integration by parts yields that
\begin{equation}
    \label{h-ene:008}
    \begin{aligned}
    J_{3,2} & = \int \varepsilon^2 Q(\partial \tau, \nabla \partial^5 u) : \partial^6 \tau \,dx + \int \varepsilon^2 Q( \tau, \nabla \partial^5 u) : \partial^7 \tau \,dx\\
    & \lesssim \varepsilon \norm{\nabla \tau}{L^4} \norm{\nabla^6 u}{L^2} \norm{\varepsilon \nabla^6 \tau}{L^4} + \varepsilon \norm{\tau}{L^\infty} \norm{\nabla^6 u}{L^2} \norm{\varepsilon \nabla^7 \tau}{L^2}\\
    & \lesssim \varepsilon \norm{\tau}{H^2} \mathcal E_\mrm{high}.
    \end{aligned}
\end{equation}
Therefore, one has that
\begin{equation}
    \label{h-ene:009}
    J_3 \lesssim \norm{u}{H^1}^{1/2} (\norm{u}{H^3}^{1/2} + \varepsilon^{1/2} \norm{\tau}{H^6}^{1/2})  \mathcal E_\mrm{high} + \varepsilon \norm{\tau}{H^2} \mathcal E_\mrm{high}.
\end{equation}

\smallskip

To conclude this section, the same estimates as in \eqref{h-ene:001}--\eqref{h-ene:009} hold for all spatial derivatives no larger than $ 6 $. Therefore, one has that
\begin{equation}
    \label{h-ene:010}
    \frac{1}{2}\dfrac{d}{dt} \norm{u, \varepsilon \tau, \varepsilon \nabla \tau}{H^6}^2 + \norm{\tau}{H^6}^2 \lesssim \norm{u}{H^1}^{1/2} (\norm{u}{H^3}^{1/2} + \varepsilon^{1/2} \norm{\tau}{H^6}^{1/2})  \mathcal E_\mrm{high} + \varepsilon \norm{\sigma, \nabla u}{H^2} \mathcal E_\mrm{high}.
\end{equation}
where we have used the fact, thanks to \eqref{def:tight-variable},
\begin{equation}
    \label{h-ene:011}
    \norm{\tau}{H^2} \lesssim \norm{\sigma, \nabla u}{H^2}. 
\end{equation}
Recalling the definition of high norm in \eqref{def:high-norm}, one can conclude from \eqref{h-ene:010}, after applying Gr\"onwall's inequality, that
\begin{equation}
    \label{h-ene:total}
    \mathcal E_\mrm{high}(t) \leq \mathcal E_\mrm{high}(0) e^{\mathfrak c_2 \int_0^t (\norm{u(s)}{H^1}^{1/2} (\norm{u}{H^3}^{1/2} + \varepsilon^{1/2} \norm{\tau}{H^6}^{1/2})  + \varepsilon \norm{\nabla u(s),\frac{\sigma(s)}{\varepsilon}}{H^2}) \,ds },
\end{equation}
for some constant $ \mathfrak c_2 \in(0,\infty) $. Meanwhile, using \eqref{low-decay:003}, \eqref{low-ene:total}

\begin{equation}
    \label{h-ene:total-001}
    \begin{gathered}
        \int_0^t (\norm{u(s)}{H^1}^{1/2} (\norm{u}{H^3}^{1/2} + \varepsilon^{1/2} \norm{\tau}{H^6}^{1/2})  + \varepsilon \norm{\nabla u(s),\frac{\sigma(s)}{\varepsilon}}{H^2}) \,ds \\
        \lesssim \int_0^t \lbrack e^{-\mathfrak d_1 s/4} \mathcal E_\mrm{low}^{1/4}(0) + \varepsilon^{1/2} \mathcal E_\mrm{low}^{1/4}(s) \rbrack (\norm{u}{H^3}^{1/2} + \varepsilon^{1/2} \norm{\tau}{H^6}^{1/2}) \,ds + \varepsilon t^{1/2} \mathcal E_\mrm{low}^{1/2}(t) \\
        \lesssim \mathcal E_\mrm{low}^{1/4}(0) (\mathcal E_\mrm{low}^{1/4}(t) + \varepsilon^{1/2} \mathcal E_\mrm{high}(t)) + \varepsilon^{1/2} t^{3/4} (\mathcal E_\mrm{low}^{1/2}(t) + \varepsilon \mathcal E_\mrm{high}^{1/2}(t)) + \varepsilon t^{1/2} \mathcal E_\mrm{low}^{1/2}(t)\\
        \lesssim (\mathcal E_\mrm{low}(0) + 1)^{3/4} + \varepsilon^{1/2} \mathcal E_\mrm{low}^{1/4}(0) (\mathcal E_\mrm{total}(t) + 1) + \varepsilon^{1/2} \mathcal E_\mrm{low}^{1/4}(0) \mathcal E_\mrm{total}^{1/4}(t)\\
        + (\varepsilon^{1/2} t^{3/4} + \varepsilon t^{1/2}) \lbrack (\mathcal E_\mrm{low}(0)+1)+ \varepsilon (\mathcal E_\mrm{total}(t) + 1)^2 + \varepsilon \mathcal E_\mrm{total}^{1/2}(t)\rbrack.
    \end{gathered}
\end{equation}

\section{Long time existence}
\label{sec:long-time}
In this section, we give the proof of Theorem \ref{thm:long-time}.
\begin{proof}[Proof of Theorem \ref{thm:long-time}]
    It follows from \eqref{low-ene:total}, \eqref{h-ene:total}, and \eqref{h-ene:total-001} that, one has
\begin{equation}
    \label{time-ene:001}
    \mathfrak F \leq \mathfrak c_3 ( 1 + \varepsilon^2 \mathfrak F^4) + \mathfrak c_3 e^{\mathfrak c_3 \lbrack 1 + \varepsilon^{1/2} \mathfrak F + (\varepsilon^{1/2} t^{3/4} + \varepsilon t^{1/2})(1 + \varepsilon \mathfrak F^2)}
\end{equation}
for some constant $ \mathfrak c_3 \in (0,\infty) $, depending only on the initial data $ \mathcal E_\mrm{total}(0) $, where 
\begin{equation}
    \label{time-ene:002}
    \mathfrak F = \mathfrak F(t) := \mathcal E_\mrm{total}(t) + 1. 
\end{equation}
Then one can conclude from \eqref{time-ene:001} that for some $ 0< \varepsilon_0 \ll 1 $ and $ \mathfrak c_4 \in (0,\infty) $, provided that
\begin{equation}
    \label{time-ene:003}
    0 < \varepsilon \leq \varepsilon_0,
\end{equation}
one has that 
\begin{equation}
\label{time-ene:004}
    \mathfrak F (t) \leq 2 \mathfrak c_3 + \mathfrak c_3 e^{4 \mathfrak c_3} \qquad \text{for} \ 0 \leq t \leq \frac{1}{\varepsilon^{2/3}}.
\end{equation}
This finishes the proof of theorem \ref{thm:long-time}.
\end{proof}

\section{High Weissenberg number imit}
\label{sec:hw-limit}
In this section, we prove Theorem \ref{thm:hw-limit}.

\begin{proof}
    For arbitrary $ T \in (0,\infty) $, let $ \varepsilon_T := \min\lbrace \varepsilon_0, T^{-3/2} \rbrace $. Then for $ \varepsilon \leq \varepsilon_T \leq \varepsilon_0 $, one has that from \eqref{thm-1-est} (or equivalently, \eqref{time-ene:004})
\begin{equation}
    \label{hw-limit:001}
    \begin{gathered}
    \sup_{0 \leq t \leq T} (\norm{u(t),\varepsilon \tau(t), \varepsilon\nabla\tau(s)}{H^6}^2 + \norm{\tau(t) - D(u)(t), \nabla (\tau(t) - D(u)(t))}{H^2}^2 ) \\
    + \int_0^{T} (\norm{\tau(t)}{H^6}^2 + \norm{\nabla u(t), \frac{\tau(t) - D(u)(t)}{\varepsilon}}{H^2}^2 ) \,dt < C,
    \end{gathered}
\end{equation}
for some constant $ C \in (0,\infty) $. In particular, one has that, as $ \varepsilon \rightarrow 0 $, 
\begin{align}
        u & \overset{*}{\rightharpoonup} v && \text{in} \ L^\infty(0,T;H^6),\\
        u & \rightharpoonup v && \text{in} \ L^2(0,T;H^3), \\
    \label{hw-limit:003}
        \tau & \rightarrow D(v) && \text{in} \ L^2(0,T;H^2),         
\end{align}
for some $ v \in L^\infty(0,T;H^6) $.
Meanwhile, from \subeqref{sys:oldroyd-bb}{1}, one has that
\begin{equation}
    \partial_t u = - u \cdot \nabla u - \nabla p + \Delta u + \nabla \cdot (\tau - D(u)), 
\end{equation}
and thus, thanks to \eqref{hw-limit:001}, one can conclude that
\begin{equation}
    \label{hw-limit:002}
    \sup_{0\leq t \leq T}\norm{\partial_t u(t)}{H^2}^2 \lesssim \sup_{0\leq t \leq T} \norm{u(t)}{H^3}^2 + \norm{u(t)}{H^4} + \norm{\nabla (\tau(t) - D(u)(t))}{H^2}^2  < \infty. 
\end{equation}
Therefore, applying the Aubin--Lions compactness lemma (see, e.g., \cite{Simon2003}) leads to
\begin{equation}
    \label{hw-limit:004}
    \begin{aligned}
        u & \rightarrow  v && \text{in} \ C(0,T;H^5). \\
    \end{aligned}
\end{equation}
In particular, with \eqref{hw-limit:003} and \eqref{hw-limit:004}, it is easy to verify that $ v $ solves the Navier-Stokes equations, i.e., 
\begin{equation}
    \label{hw-limit:005}
    \partial_t v + v \cdot \nabla v + \nabla \pi = \nabla \cdot D(v), \qquad \nabla \cdot v = 0. 
\end{equation}
This finishes the proof of theorem \ref{thm:hw-limit}.
\end{proof}

\section*{Acknowledgements}
WW was partially supported by the Simons Foundation Travel Support for Mathematicians (No. 0007730).

\end{document}